\newcommand{\R}{\mathbb{R}}
\newtheorem{proposition}{Proposition}[section]
\newtheorem{definition}[proposition]{Definition}
\newcommand{\cD}{\mathcal{D}}
\newcommand{\SE}{\mathrm{SE}}
\newcommand{\regu}{\mathrm{reg}}
\newcommand{\beq}{\begin{equation}}
\newcommand{\eeq}{\end{equation}}
\newcommand{\lra}{\longrightarrow}
\newcommand{\dom}{D}
\newcommand{\xt}{{\tilde{x}}}
\newcommand{\yt}{{\tilde{y}}}
\newcommand{\xh}{x}
\newcommand{\yh}{y}
\newcommand{\bdot}{\dot b}
\newcommand{\vdot}{\dot v}
\newcommand{\fdot}{\dot f}
\newcommand{\thetadot}{\dot \theta}
\newcommand{\norm}[1]{\left\lVert#1\right\rVert}
\begin{document}

\centerline{\large \bf A new perspective on border completion in visual cortex}

\medskip
\centerline{\large \bf  
as bicycle rear wheel geodesics paths}

\medskip
\centerline{\large \bf via sub-Riemannian Hamiltonian formalism} 

\bigskip

\centerline{R. Fioresi\footnote{University of Bologna, Italy}\footnote{Funding by COST Action
CaLISTA CA21109, CaLIGOLA MSCA-2021-SE-01-101086123, GNSAGA}, 
A. Marraffa \footnote{University of Bologna, Italy},
J. Petkovic\footnote{Mainz University Medicine, Germany}}

\begin{abstract} 
We present a review of known models and a new simple mathematical modelling for 
border completion in the visual cortex V1 highlighting the
striking analogies with bicycle rear wheel motions in the plane.
\end{abstract}

\section{Introduction}\label{sec-intro}

The issue of perceptual completion has been widely studied in psychology from a phenomenological point of view. The Gestalt  psychology's Pr\"agnanz laws of perception  (see \cite{mather} and
refs. therein), identify the empirical principles under which the human brain integrates local and incomplete visual stimuli into shapes and smooth figures. Among them, continuity is the leading principle for the perception of contours and is reproduced as a law in any mathematical modeling of boundary completion.

A more quantitative approach to the study of visual phenomena and edge reconstruction, appears in \cite{hubel}, where the notion of receptive fields of simple and complex cells in the primary visual cortex is clearly introduced, and the functional structure of the primary visual cortex is also outlined \cite{wiesel}. Furthermore, the primary visual cortex, with its topographic, layered, and columnar functional organization, lends itself to fit into a differential geometry framework, through the concept of fiber bundles. This fact led to the first mathematical treatment of V1 in \cite{hoffmann} (see also \cite{bc} and refs. therein).
In particular, the experiments in \cite{field2} suggested the employment of geometrical methods to describe the border completion mechanism as in
\cite{mumford}. In \cite{petitot, citti} a natural sub-Riemannian structure on V1 is introduced through analytic methods, arriving to a rigorous notion of geodesic for the border completion problem. 
Later on more work in the same direction appeared in \cite{bc} and \cite{alexeevski}.

\medskip
The purpose of our paper is to provide a clear and organized overview of different treatments (e.g. \cite{citti, mumford, petitot, hoffmann}, \cite{alexeevski} and
refs. therein), highlighting, in particular, 
the striking analogy between human vision and bycicle kinematics (see \cite{mont}).
In fact, we tackle the problem of border completion, 
by exploiting the
analogies between the physiological
framework, that we established for border reconstruction, and the one described in \cite{mont} for bicycle rear wheel trajectories. In particular, taking advantage of this correspondence,
we are able to use the bicycle model to reformulate the problem of boundary completion in terms of finding solutions to the Hamilton's equations of the given sub-Riemannian structure in a more
geometric and intrinsic manner \cite{mont, montgomery}.

With this aim, in section \ref{sec-visual}, we give an insight into the neurophysiological aspects of the early visual system. We provide some elementary notions about the role of the retina and the lateral geniculate nucleus, and we outline the structural organization of the primary visual cortex. Here, we only focus on the aspects concerning oriented edge detection, and a more comprehensive description can be found in books on neural physiology (see \cite{Kandel} and refs. therein).

In section \ref{sec-compare}, we give a brief overview and comparison of models present
in the literature, relevant to our study.

In section \ref{sec-bike}, we recall the main features of the model for computing trajectories of the front and rear wheel of the bike through subriemannian Hamiltonian mechanics in \cite{mont} and \cite{montgomery}.

Finally, in section \ref{sec-geo} we highlight the analogies between the model for 
rear wheel bicycle trajectories and a model for the border detection mechanism in V1 through the \textit{ orientation vector field}. We are able to establish a dictionary that allows us to translate one problem into the other, and treat them as one from a mathematical point of view. 

Finally, in section \ref{sec-completion}, we exploit this identification to find the geodesic's equation of the subriemannian structure (\cite{citti, petitot}) in the Hamiltonian formalism (\cite{montgomery}). The solutions 
to the geodesic's equations provide us with the reconstructed boundaries in a more natural and
intrisic way.

\section{The visual pathway} \label{sec-visual}

In this section, we give a quick overview of the visual system's lowest portion, from the retina to the primary visual cortex; later on we shall outline a mathematical model for orientation construction which is faithful to the effective functioning of the neural structures involved. Hence, we need to address the question of how the perception of orientation arises from sensory receptors sensitive to single, discrete light inputs. With this aim, we focus our brief introduction on the retina, the lateral geniculate nucleus, and the primary visual cortex itself.

The first structure involved in the process of transforming images into nerve signals is the retina, which is located in the posterior inner surface of the eye. Roughly speaking, the eye captures the light input and projects it onto the retinal photoreceptors capable of measuring light intensity. At this early stage, the neural representation of the visual scene is still simple, with hyperpolarized receptors in lighted areas and depolarized receptors in dark areas.

Retinal ganglion cells (RGCs) project the information to the lateral geniculate nucleus in the thalamus through the optic nerve. Their main computational feature is their circular receptive field, divided into two concentric parts in mutual competition: the center and the inhibitory surround. As a consequence, RGCs give the best possible response to the difference in brightness between their center and surrounds, while they respond weakly to uniform light stimuli falling on the whole receptive field.

The lateral geniculate nucleus (LGN) is a small ventral projection of the thalamus connected to the optic nerve. It represents a crucial relay station in the image processing pathway, and its functionalities include a smoothing effect on the visual image, which is fundamental for contour perception and our modelling of the visual image as a smooth function. Its axons are directly sent to the primary visual cortex, where the construction of the orientation percept takes place. 

Along the lower section of the visual pathway, there is faithful preservation 
of spatial information. This property is called \textit{retinotopy} and, for the primary visual cortex V1, it means that we have a homeomorphism between the hemiretinal receptoral layer and V1 called \textit{retinotopic map}. 
The existence of such a map allows us to identify V1 with a  compact domain $D$ in $\mathbb{R}^2$.

In the primary visual cortex, we find cortical columns composed of simple and complex cells. These columns, through the non-concentric, striped receptive fields of their cells, build the orientation information from the input signal, effectively associating an orientation value, i.e. an angle, $\theta$ to each point $x,y$ of the perceived image.
\begin{figure}
\centering
\includegraphics[width=.9\textwidth]{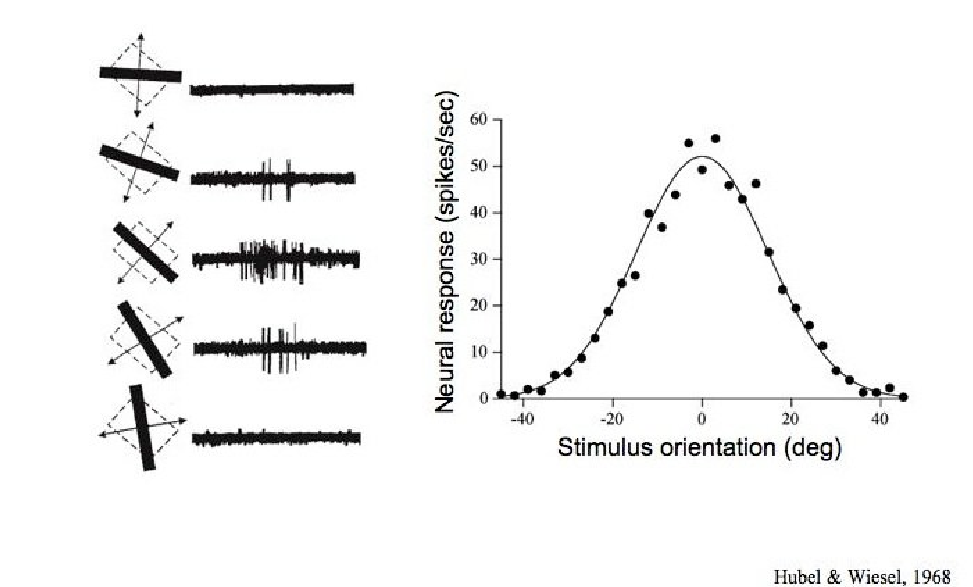}
\label{fig:}
\end{figure}
According to the fundamental work \cite{hubel}, V1 is organized in functional 
units called \textit{hypercolumns} that allow the analysis of different aspects of the visual information separately and in parallel. In particular, an orientation hypercolumn contains a full set of simple and complex cells, i.e. a full set of \textit{orientation columns}. Within an orientation column, we have cells which address the same portion of the visual image and best respond to the same orientation $\theta$.
\begin{figure}[!h]
    \centering
    \includegraphics[width=.75\textwidth]{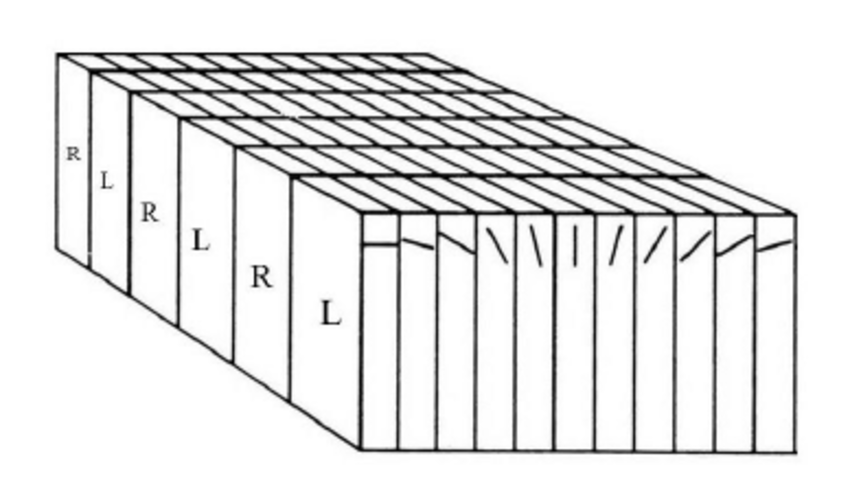}
    \captionsetup{width=.75\textwidth}
    \caption{Ice cube model}
    \label{fig:icecube}
\end{figure}

In this model, called the {\it ice cube model}, see
Fig. \ref{fig:icecube}, 
we assume that at each point of the domain $D$ there is a 
complete orientation hypercolumn, i.e at each point of the domain $D$ we can perceive any orientation from $0$ to $2\pi$. 
This fact leads to identifying V1 with the 
\textit{orientation bundle} defined as 
$\mathcal{E} := D \times S^1 \to D$, which also represents the configuration space manifold for border reconstruction (see \cite{citti}, \cite{petitot}, \cite{hoffmann}).

\section{Vision models for boundary completion}\label{sec-compare}
In this section we give a brief overview of the models for the boundary completion mechanism; for more details see \cite{citti2014} and refs. therein.

One of the first successful attempts to reproduce this phenomenon was reported by Mumford in \cite{mumford}. Here, starting from structural considerations on the V1 horizontal connectivity, the mechanism of border completion is modeled as a stochastic process where active hypercolumns induce a random compatible orientation in the neighbouring inactive hypercolumns, i.e. the hypercolumns corresponding to points in the image where a border is missing. In this framework, a boundary is completed when this inductive process generates a line joining two given orientation cues. Clearly, there is an infinite number of curves that can be obtained in this fashion, but Mumford assumes that the ones corresponding to the experimentally seen reconstructed boundaries are the ones that maximize the probability of reaching completion after a fixed number of iterations. This condition is then proven to be equivalent to minimizing the elastica integral 
 
\begin{equation*}
    \int_{\gamma} \alpha \, k^2+\beta \; ds 
\end{equation*}
where $\alpha$ and $\beta$ are constants,$k$ the curvature of $\gamma$, $ds$ the arc length. 
Such minimization leads to the \textit{elastica curve}, originally investigated by Bernoulli
and Euler \cite{euler} (for a review both mathematical and historical see \cite{Matsutani2010EulersEA} and refs. therin).
Later, Petitot and Tondut \cite{petitot} show how Mumford's approach can be reformulated in a differential geometric framework. Building upon the intuitions of Hoffman \cite{hoffmann}, who first provided an interpretation of the 
visual cortex as a contact bundle and described the visual psychololgical constancies as invariants of the action of the conformal group $CO(1,3)$ on such bundle, they formulate the boundary completion problem as a minimization of a suitable Lagrangian, and show that this approach is indeed equivalent to the elastica integral minimization.
Years later, Citti and Sarti (\cite{citti}) further expanded this geometrical approach, interpreting Hoffman's visual bundle as a sub-riemannian manifold, and the reconstructed boundaries as (horizontal) geodesics in this space. In addition, thanks to the fact that the distribution of such manifold is bracket-generating, they propose an iterative algorithm to compute the geodesics as solutions of the sub-heat equation
\begin{equation*}
    \partial_t u = \Delta_h u
\end{equation*}
where $\Delta_h$ is the sub-laplacian operator (a laplacian with derivations taken only along the horizontal directions). The existence and smoothness of $u$ is, indeed, granted by the bracked-generating distribution (\textit{H{\"o}rmander's condition}, see also \cite{citti2014} for a more comprehensive treatment and a complete bibliography).

In the next sections, we will show that our considerations, 
based on the analysis of the physiological structures of the visual
pathway,
can provide, with a geometric and intrinsec approach, a 
model for border completion, strictly related to the literature
ones as described above, and
leveraging a fascinating correspondence between the visual cortical structure and the paths traced by the rear wheel of a bike in motion.

\section{Bicycle Paths}\label{sec-bike}

In this section we briefly recall how to compute the 
trajectories of front and rear bicycle wheels with the
aid of subriemannian geometry in the Hamiltonian formalism.
Our main sources are \cite{mont} and \cite{montgomery}, where the
front wheel geodesics are explicitly computed.

\medskip
The configuration space of the front $f$ and rear wheel $b$ 
of a bicycle is given by:
\beq\label{wheels}
Q=\{(b,f) \in \R^2\times \R^2 \,| \, \|b-f\|= \ell\}
\eeq
where $\ell$ is
the distance between the rear and front wheels.
Assume 
$\ell=1$ as in \cite{mont}
Let us denote 
$$
v=f-b=(\cos \theta, \sin \theta) \quad \mapsto \quad (\xh,\yh)=
(\xt,\yt)-(\cos \theta, \sin \theta)
$$
where $b=(\xh,\yh)$, $f=(\xt,\yt)$. We are interested, differently
from \cite{mont}, in the motion of $b$, for reasons that will be
clarified later.
\begin{figure}
    \centering
    \includegraphics[width=.65\textwidth]{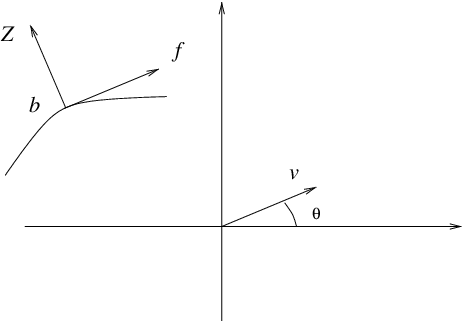}
    \caption{Configuration Space $Q$}
    \label{bike}
\end{figure}

We notice immediately that $Q\cong \R^2 \times S^1 \cong \SE(2)$. The
key importance of such identification, in the light of our previous sections
on the modeling of the visual cortex and border detection/reconstruction, will be fully elucidated in our
next section.

\medskip
The motion of $b$ is  subject
to the differential constraint 
$$
\bdot=kv,
$$
the so called {\sl no skid condition}, meaning that, when $b$ 
starts to move, it must be in the direction of $v$. We fix $k=1$,
an immaterial constant for our problem.
We observe that $\dot{v}$ is orthogonal to $v$, hence $\{v,\dot{v}\}$ is an orthonormal basis for $\mathbb{R}^2$.

Let $\fdot=\fdot_0+\fdot_\perp$ be
the decomposition of $f$ in such basis, hence:
$$
\fdot_\perp=\vdot=\fdot-\bdot=\fdot-\fdot_0=\fdot- \langle \fdot,v\rangle v
$$
the last equality is just expressing the projection onto $v$.
In coordinates, we obtain:
$$
(-\thetadot \sin \theta, \thetadot \cos \theta)=(\dot \xt, \dot \yt)-
\langle (\dot \xt, \dot \yt),(\cos \theta,  \sin \theta)  \rangle 
(\cos \theta, \sin \theta)
$$

A small calculation gives a differential constraint:
\beq\label{dc}
\thetadot - \cos\theta \dot \yt + \sin \theta \dot \xt=0
\eeq
If we substitute $f=b+v$ 
we get:
$$
\cos\theta \dot y - \sin\theta \dot x=0
$$
which corresponds to the contact form:
$$
\eta=\cos\theta  dy - \sin\theta dx 
$$
The kernel of $\eta$ gives the distribution spanned by:
$$
\cD = \mathrm{span} \{X=\partial_\theta, 
Y=\cos\theta \partial_x+\sin\theta\partial_y\}
$$
Its \textit{Reeb vector field} is 
\beq\label{reeb}
Z=\cos\theta \partial_y-
\sin\theta\partial_x
\eeq 
as one can readily check.

We now observe that the three vectors $X$, $Y$, $Z$ are left invariant
for the action of the special euclidean group $\SE(2)$, expressed
in the coordinates $(x,y,\theta)$.

\medskip
There is a natural metric on $\cD$ imposing the orthogonality of
the given generators: up to scalar multiplication this is the
only invariant subriemannian metric on $\SE(2)$ \cite{mont}.

\section{Border detection in V1 and bicycle paths}\label{sec-geo}

We want to establish a dictionary between concepts belonging
to very different scopes, visual system and bicycle paths,
that we introduced
in our previous sections to account for the striking
mathematical analogy, noticed already by the authors in \cite{mont}.
This will enable us to provide a simplified model for
the border completion, in our next section.

\medskip
In Sec. \ref{sec-visual}, we modelled the visual cortex (V1)
as the fiber bundle $\R^2 \times S^1$ over $\R^2$. Now we want to relate this
model with the configuration space $Q$ of bicycle
wheels described in
section (\ref{wheels}).

\medskip
We consider the Reeb vector field $Z$ (\ref{reeb}) on $\R^2 \times S^1$:
\begin{equation*}
    Z(x,y,\theta) \, =\, -sin\theta\, \partial_x + cos\theta\, \partial_y,
\end{equation*} 
Thanks to the action of the lateral geniculate nucleus, 
we can identify each image with a smooth function 
$I \colon D \to \mathbb{R}$, for a receptive field $D \subset \R^2$.
We can therefore define the \textit{orientation of a function}.

\begin{definition}
\label{def::theta}
{\rm Let $I \colon D \to \mathbb{R}$ be a smooth function, and $reg(D) \subseteq D$ 
the subset of the regular points of $I$.
We define the \textit{orientation map of $I$} as}
\begin{align*}
    \Theta \colon reg(D) &\to S^1 \\
    (x,y) &\mapsto \Theta(x,y) \, = \, 
\mathrm{argmax}_{\theta \in S^1} \{ Z(\theta)I(x,y)\}
\end{align*}
\end{definition}
We call $Z$ the \textit{orientation vector field}.

The map $\Theta$ is in charge of reproducing the behavior of 
simple and complex cells, so that an oriented edge is assigned at 
each point. We have that $\Theta$ is well defined. 

\begin{proposition}\label{border-prop}
Let $I:\dom \lra \R$ be as above and $(x_0,y_0)\in \dom$ a regular point for $I$.
Then, we have the following:
\begin{enumerate}
\item There there exists a unique $\theta_{x_0,y_0} \in S^1$ for which
the function $\zeta_{x,y}: S^1 \longrightarrow \R$, 
$\zeta_{x,y}(\theta):= Z(\theta) \, I(x,y)$ attains its maximum.
\item The map $\Theta:\regu(D) \longrightarrow  S^1$,
$\Theta(x,y)=\theta_{x,y}$
is well defined and differentiable.
\item The set:
$$
\Phi=\{(x,y,\Theta(x,y)) \in \dom \times S^1 : \Theta(x,y) = \theta_{x,y}\}
$$
is a regular submanifold of $\dom \times S^1$.
\end{enumerate}
\end{proposition}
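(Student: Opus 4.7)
The plan is to reduce everything to a simple Cauchy--Schwarz observation on the circle, and then extract the submanifold statement as a routine consequence of having a smooth graph.

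First I would compute $\zeta_{x,y}$ explicitly using the definition of $Z$:
\begin{equation*}
\zeta_{x,y}(\theta) \;=\; -\sin\theta\, I_x(x,y) + \cos\theta\, I_y(x,y) \;=\; \langle u(\theta),\,\nabla I(x,y)\rangle,
\end{equation*}
where $u(\theta):=(-\sin\theta,\cos\theta)$ is a unit vector. By Cauchy--Schwarz, $\zeta_{x,y}(\theta)\le \|\nabla I(x,y)\|$, with equality iff $u(\theta)$ is the positive unit multiple of $\nabla I(x,y)$. Since $(x_0,y_0)$ is a regular point of $I$, we have $\nabla I(x_0,y_0)\ne 0$, so the system
\begin{equation*}
-\sin\theta \;=\; \frac{I_x(x,y)}{\|\nabla I(x,y)\|},\qquad \cos\theta\;=\;\frac{I_y(x,y)}{\|\nabla I(x,y)\|}
\end{equation*}
has a unique solution $\theta_{x,y}\in S^1$. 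This gives (1).

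For (2), the same system gives a local formula for $\Theta$: on any open set where, say, $I_y\ne 0$, one can write $\theta_{x,y}=\arctan(-I_x/I_y)$ up to an appropriate branch, and analogously where $I_x\ne 0$. More intrinsically, $\Theta$ is the composition of the smooth map $(x,y)\mapsto \nabla I(x,y)/\|\nabla I(x,y)\|\in S^1$ with the smooth diffeomorphism $S^1\to S^1$ sending $(a,b)\mapsto \theta$ with $(-\sin\theta,\cos\theta)=(a,b)$. Both factors are smooth on $\regu(D)$ because $\nabla I$ is smooth and non-vanishing there, so $\Theta$ is smooth (in particular well defined and differentiable).

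For (3), I would simply observe that
\begin{equation*}
\Phi \;=\; \{(x,y,\Theta(x,y)) : (x,y)\in\regu(D)\}
\end{equation*}
is the graph of the smooth map $\Theta\colon\regu(D)\to S^1$ inside the product manifold $\regu(D)\times S^1\subset \dom\times S^1$. The map $\sigma\colon \regu(D)\to \dom\times S^1$, $(x,y)\mapsto(x,y,\Theta(x,y))$, is a smooth injective immersion (its differential always has the full $2\times 2$ identity block in the first two coordinates), and it is a homeomorphism onto its image because the projection $\dom\times S^1\to\dom$ restricted to $\Phi$ is the continuous inverse of $\sigma$. Hence $\sigma$ is a smooth embedding and $\Phi$ is a regular $2$-dimensional submanifold. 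The only mildly delicate point in the whole argument is verifying smoothness of the inverse of $\theta\mapsto(-\sin\theta,\cos\theta)$ globally on $S^1$, which is handled by patching the two arctangent charts mentioned above; everything else is a direct consequence of Cauchy--Schwarz and the regularity hypothesis on $(x_0,y_0)$.
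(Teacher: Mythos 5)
Your argument is correct, and it is worth noting that it differs from the paper's proof in each of the three parts. For (1), the paper establishes existence by compactness of $S^1$ and uniqueness by differentiating $\zeta_{x,y}$ in $\theta$ and analyzing the critical points, whereas you identify the maximizer in closed form via Cauchy--Schwarz: $\zeta_{x,y}(\theta)=\langle u(\theta),\nabla I\rangle\le\|\nabla I\|$ with equality exactly when $u(\theta)=\nabla I/\|\nabla I\|$. This buys you an explicit formula for $\theta_{x,y}$, which is exactly the alignment statement ($Z$ parallel to $\nabla I$) that the paper only records informally after the proposition, and it makes (2) genuinely provable rather than merely asserted: the paper's proof of (2) gives no argument for differentiability, while your composition of the smooth unit-gradient map $(x,y)\mapsto\nabla I/\|\nabla I\|$ with the smooth inverse of $\theta\mapsto(-\sin\theta,\cos\theta)$ (patched over the two arctangent charts) supplies one. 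For (3), the paper appeals to the implicit function theorem, presumably applied to the critical-point equation for $\zeta_{x,y}$; you instead use the standard fact that the graph of a smooth map is an embedded submanifold, verified through the injective immersion $\sigma$ and its continuous inverse given by the projection. Both routes are sound; the graph argument is more economical once smoothness of $\Theta$ is in hand, while the implicit-function-theorem route would also work without first proving (2), at the cost of checking a nondegeneracy condition. The only points you leave implicit --- that $\regu(D)$ is open in $\dom$ (so that an embedded submanifold of $\regu(D)\times S^1$ is one of $\dom\times S^1$), and the branch bookkeeping for the inverse angle map --- are minor and also glossed over in the paper.
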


\begin{proof}
(1). Since $\zeta_{x,y}$ is a differentiable function on a compact domain
it admits maximum, we need to show it is unique. We can explicitly
express:
$$
\zeta_{x,y}(\theta)=-\sin \theta \; \partial_xI+\cos \theta \; \partial_yI
$$
Since $(\partial_xI, \partial_yI)\neq (0,0)$ and it is constant,
by elementary considerations, taking the derivative of $\zeta_{x,y}$ with
respect to $\theta$ we see the maximum is unique.

(2). $\Theta$ is well defined by $(1)$ and differentiable.

(3). It is an immediate consequence of the implicit function theorem.

\end{proof}

Notice the following important facts:
\begin{itemize}
    \item the locality of the operator $Z(\theta)$  
mirrors the locality of the hypercolumnar anatomical connections;
    \item  its operating principle
is a good description of the combined action of simple and complex cells
(though different from their individual behaviour);
\end{itemize}

We can then view a smooth contour as the level set 
of some smooth function $I\colon D \to \mathbb{R}$; $\Theta$
is the orientation of the countour and the orientation vector field
$Z$, by construction, is orthogonal to such contour, since 
$\mathrm{argmax}_{\theta \in S^1} \{ Z(\theta)I(x,y)\}$ occurs when
$Z$ is aligned with $\nabla \, I$.

On the other hand,
as we can see from our Fig. \ref{bike}, $Z$, here the
Reeb vector, is orthogonal to $v$, which is tangent to the
trajectory of rear wheel $b$ by its very construction. 
Hence the angle we use to steer the handle of the bicycle, brings a
variation of the angle of the orientation vector field along
rear wheel path.  
So we have:

\medskip
\textit{Bicycle rear wheel paths described by the differential
constraint of the contact structure with the Reeb vector $Z$
coincide with the borders of an image detected in the visual cortex
with orientation vector $Z$}

\medskip
We report in the following table the dictionary between these
two different yet intriguingly related dynamics.

\begin{center}
  \begin{tabular}{  p{3cm} | p{3cm} | p{5cm} }
    \hline
Mathematics & Bicycle & Primary visual cortex\\ 
\hline
$b=(x,y)$ & position of the rear wheel & point in the image
of the retinotopic map
of the visual field \\
\hline
$v=f-b=(\cos\theta,\sin\theta)$ & direction of motion & detected 
orientation at $b$ 
\\ \hline
$(b,v)$ & point in configuration space & hypercolumn  \\ 
\hline
$Z=\dot{v}$
& normal to rear wheel path & orientation vector field
\\ \hline
$Q\cong \R^2 \times S^1 \cong \SE(2)$ & configuration space of
bicycle & V1 total space\\
    \hline
  \end{tabular}
\end{center}

\section{Rear wheel path and border completion}
\label{sec-completion}

Once the analogy is established in the dictionary of the previous
table, we can proceed and compute the geodesics of the
subriemannian structure defined in Sec. \ref{sec-bike} in the
Hamiltonian formalism. They give at once both 
the rear wheel trajectory and the border completion curve, given initial and final position and orientation of the reconstructed edge or, respectively, of the bicycle rear wheel.
We notice that, while in general Lagrangian and Hamiltonian
geodesics differ, here since we are in the special situation
of a fiber bundle $SE(2) \lra \R^2\cong SE(2)/S^1$, they coincide (see \cite{montgomery} Ch. 1).
Hence, with the Hamiltonian
formalism, we retrieve the same geodesic up to a (differential) phase factor 
as in \cite{citti}, hence strictly related to the treatment in 
\cite{mumford} (Sec. \ref{sec-compare})
in the visual cortex context. Besides we obtain the same equations as 
the bicycle paths in \cite{mont}, though the authors do not
fully compute such paths for the rear wheel, but for the front one
only (see also \cite{ms}).
Since we have the same differential constraint and the same contact form $\eta$, we have the same distribution \begin{align*}
    \cD\,= \mathrm{span} \{X=\partial_\theta, 
Y=\cos\theta \partial_x+\sin\theta\partial_y\}.
\end{align*} 
This distribution is bracket generating and, choosing the only invariant sub-Riemannian metric on $SE(2)$, we have $(Q,\cD,\langle,\rangle)$ sub-Riemannian manifold. 
Notice that both $X$ and $Y$ are left invariant vector fields for $SE(2)$,
so that our distribution is naturally invariant under the special euclidean group action.
Furthermore, since the uniqueness of the sub-Riemannian invariant metric, our construction
has this key natural invariance built-in and it appears more intrinsec than the equivalent descriptions
in Sec. \ref{sec-compare} (see \cite{mont,ms}).

\medskip
Following \cite{montgomery},  we can define a cometric for 
every $q \in \mathcal{E}$,
\begin{align*}
    \beta_q \colon T^*_q\mathcal{E} &\to T_q\mathcal{E} \\
    p &\mapsto \beta_q(p).
\end{align*} such that Im$\beta_q \, = \, \mathrm{span}\{X(q), Y(q)\}$.
Let us introduce the local coordinate system $(x,y, \theta, p_x, p_y, p_{\theta})$ on $T^*\mathcal{E}$, where $(p_x, p_y, p_{\theta})$ are the local coordinates on the cotangent bundle corresponding to $(x,y, \theta)$, defined by writing any covector $p$ as $p \, = \,p_x \, dx \, + \, p_y \, dy \, + \, p_{\theta} \, d\theta$.
We obtain the expression for the cometric in local coordinates
\[
\beta_q = \begin{pmatrix}
cos\theta & 0 \\
sin\theta & 0 \\
0 & 1
\end{pmatrix}
\]
The subriemannian Hamiltonian associated with $\beta$, is the functional
\begin{align*}
    H \colon T^*\mathcal{E} &\to \mathbb{R} \\
    (q,p) &\mapsto \frac{1}{2} \langle \beta_q(p), \beta_q(p) \rangle,
\end{align*} and  we know that we can express $H$ as
\begin{equation*}
    H \, = \, \frac{1}{2} ( P_X^2, P_Y^2).
\end{equation*} Here, $P_X$ and $P_Y$ are the momentum functions of $X$ and $Y$:
\begin{align*}
    P_X \, &= \, cos\theta \, p_x + sen\theta \, p_y\\
    P_Y \, &= \, p_{\theta},
\end{align*} where $p_x= P_{\partial_x}$, $p_y= P_{\partial_y}$, $p_{\theta}= P_{\partial_{\theta}}$, coincide with the local coordinates for $T^*_q\mathcal{E}$, defined above.
Hence, in local coordinates, we can write
\begin{equation*}
    H  \, = \, \frac{1}{2} [(cos\theta \, p_x \, + \, sin\theta \, p_y)^2 \, + p_{\theta}^2].
\end{equation*}\\
We know that to each Hamiltonian functional we can associate an Hamiltonian vector field, defined by the Hamilton equations. Furthermore, we know that $\dot{f}\,=\,\{f, H\}$, for any smooth function $f$ on the cotangent bundle.
If we define the auxiliary functions
\begin{align*}
    p_1 \, &= \, P_X\, =\, cos\theta\, p_x \,+\, sen\theta \, p_y \\
    p_2 \, &= \, P_Y \, = \, p_{\theta} \\
    p_3 \, &= \, P_Z \, = \, -sin\theta \, p_x \, + \, cos\theta \,p_y.
\end{align*}
We can obtain the Hamilton equations letting $f$ vary over the coordinate functions $(x, y, \theta)$, and the auxiliary functions on the cotangent bundle $(p_1, p_2, p_3)$.
Before going on with our derivation of the Hamilton equations, let us note that
\begin{equation*}
    \{P_X, P_Y\} \, = \,P_Z \, = \, -P_{[X,Y]}.
\end{equation*}
We can now easily calculate the Hamilton equations for the position coordinates $(x, y, \theta)$
\begin{align*}
    \dot{q}^i= \{q^i, H\} \longrightarrow \begin{cases}
    \dot{x}= cos\theta \, p_1 \\
    \dot{y} = sin\theta \, p_1 \\
    \dot{\theta} = p_2
    \end{cases}
\end{align*}
and for $(p_1 p_2, p_3)$
\begin{align*}
    \dot{p}_i= \{p_i, H\} \longrightarrow \begin{cases}
    \dot{p_1} = p_3 p_2 \\
    \dot{p_2} = - p_3 p_1 \\
    \dot{p_3} = - p_1 p_2
    \end{cases}
\end{align*}

As we know that the hamiltonian functional assumes a constant value along the hamiltonian flow (i.e, the solutions to Hamilton equations), we can write
\begin{equation*}
    E \, = \, p_1^2 + p_2^2,
\end{equation*} where $E/2$ is a constant hamiltonian value. Then, we can introduce an auxiliary variable $\gamma(t)$ such that
\begin{align*}
\begin{cases}
    p_1 \, &= \, \sqrt{E}\, sin(\frac{\gamma}{2}) \\
    p_2 \, &= \, \sqrt{E} \, cos(\frac{\gamma}{2}) \\
    p_3 \, &= \frac{1}{2} \, \dot{\gamma},
    \end{cases}
\end{align*} 
and therefore we can express the other variables as
\begin{equation}\label{eqgeo}
\begin{cases}
    \dot{x} \, &= \,\sqrt{E}\, sin(\frac{\gamma}{2}) \, cos\theta \\
    \dot{y} \, &= \, \sqrt{E}\, sin(\frac{\gamma}{2}) \, sin\theta \\
    \dot{\theta} \, &= \, \sqrt{E} \, cos(\frac{\gamma}{2}).
    \end{cases}
\end{equation}
The variable $\gamma$ satisfies a pendulum-like differential equation
\begin{equation*}
    \Ddot{\gamma} \, + \, E\,sin\gamma \, = \, 0,
\end{equation*} which is not analytically solvable. Hence, in order to give a graphical representation of the geodesic curves, we need to resort to numeric integration (see also \cite{ms}).
\begin{figure}
    \centering
    \includegraphics[width=.9\textwidth]{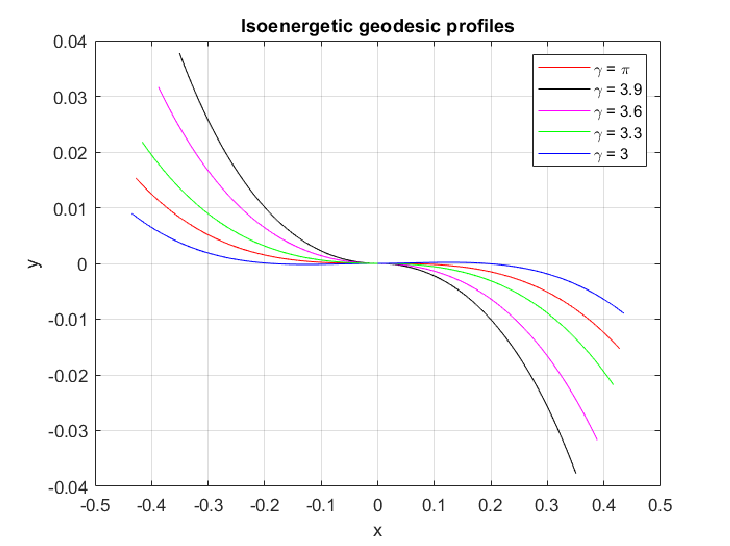}
    \caption{Solutions of the geodesic equations for some values of $\gamma$, projected onto the domain $D$. The energy is fixed to $E=0.2$.}
    \label{fig::isoenergetic}
\end{figure}
The solutions of these differential equations are the the lifts in $\mathcal{E}$ of perceived borders.
In Fig. \ref{fig::isoenergetic}, we show the projections on $D$ of some of the solutions with energy fixed to a value $E=0.2$, obtained varying the initial value of the parameter $\gamma$.
\begin{figure}[h!]
    \centering
        \hspace{-5.5mm}
        \includegraphics[width=0.5\textwidth]{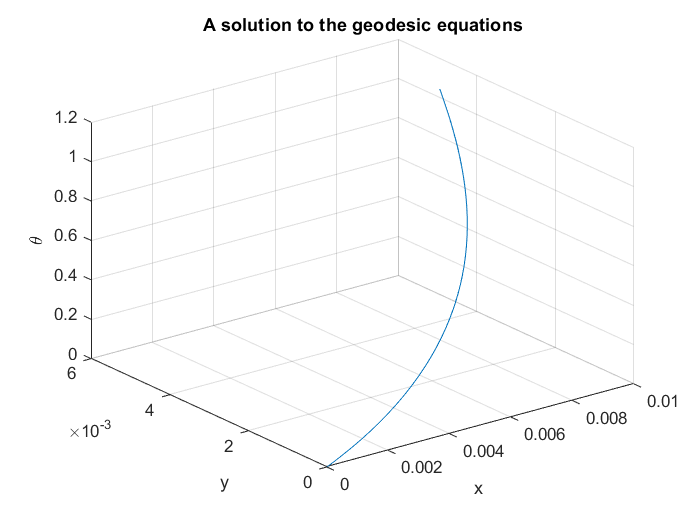}
        \hspace{-5.5mm}
    \caption{A solution to the geodesic equations that joins $(0,0,0)$ and $(0.01,0.005, \frac{\pi}{3})$ (on the right), and its projection onto the domain $D$ (on the left).}
    \label{fig::connect}
\end{figure}
A solution to the geodesic equations that joins the points $(0,0,0)$ and $(0.01,0.005, \frac{\pi}{3})$ is shown in Fig. \ref{fig::connect}, mimicking the border completion mechanism in the brain, given two boundary inducers, i.e, given initial and final coordinates $(x,y,\theta)$.

\begin{figure}[!h]
    \centering
    \includegraphics[width=\textwidth]{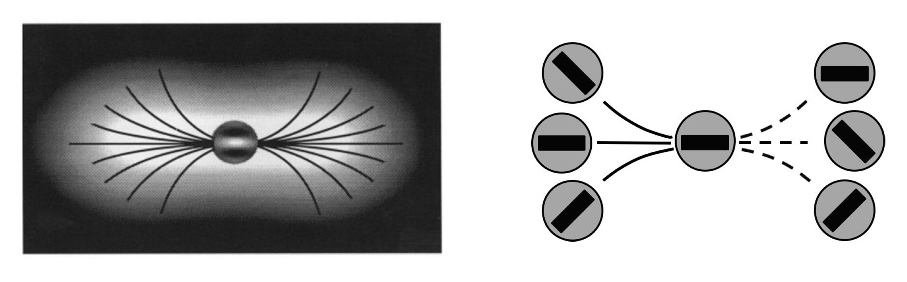}
    \captionsetup{width=\textwidth}
    \caption{The \textit{association field} (on the left): the rays extending from the ends of the central oriented element represent the optimal orientations at different positions. On the right, the specific rules of alignment are represented. \\- \cite{field}\\[10pt]}
    \label{fig::associationfield}
\end{figure}

Since we have a closed form for the vector field tangent to our solutions (see eq. (\ref{eqgeo})),
we can obtain an expression allowing us to make a direct comparison with the strictly
related treatments in \cite{citti, petitot} and also with the Euler's elastica as Mumford
introduces it for the border completion problem \cite{mumford} (see Sec. \ref{sec-compare}).
The curvature of the solution projection $\sigma = (x,y)$ is
obtained as: 
\begin{equation*}
    k_{\sigma}(t) = \frac{||\dot{\sigma}(t) \times \ddot{\sigma}(t)||}{\norm{\dot{\sigma}^3(t)}} = \frac{|\dot{\theta}|}{\sqrt{\dot{x}^2 + \dot{y}^2}} = \left| \cot{\frac{\gamma}{2}}\right|
\end{equation*}
We notice from (\ref{eqgeo}) that:
$$
\dot \Sigma=\dot x \partial_x + \dot y \partial_y + \dot \theta \partial_\theta=
\sin{\frac{\gamma}{2}}(X_1+k_\sigma X_2), \qquad 
X_1=\cos{\theta}\partial_x+\sin{\theta}\partial_y, \quad X_2=\partial_\theta
$$
We compare with the expression of $\dot \Sigma$ appearing in \cite{petitot}, which is equivalent
to the treatment in \cite{citti}:
\begin{equation}\label{pcs}
\dot{\Sigma} = X_1 + k X_2
\end{equation}
for the same vector fields $X_1$, $X_2$. We see the appearance of a \textit{phase factor}
compatible with the treatments in \cite{ms,Matsutani2010EulersEA}, due essentially to the
fact we look at the {\sl rear wheel}, while the solutions of the equation (\ref{pcs}) correspond
to the front one, leading to the elastica curve (this is a remark appearing also in \cite{mont}).

The Hamiltonian energy minimization along a curve $\Sigma$ can, therefore, be expressed in function of the projection curvature as follows:
\begin{equation*}
    \mathcal{E}(\Sigma) = \int_{\Sigma} E \, dt = \int_{\Sigma} E \sin^2{\frac{\gamma(t)}{2}} \, \left[1 + k_{\sigma}^2(t) \right]  \, dt
\end{equation*}
again, the same as in \cite{mumford}, up to the multiplicative factor $\sin^2{\frac{\gamma(t)}{2}}$
as we commented above.

We also mention that, though in general the question of sub-riemannian geodesics, may have different
answers in the Hamiltonian and Lagrangian formalism, since here we have a principal
bundle, due to the V1 modelling, there is a complete equivalence of the results obtained
in the two formalisms
(see \cite{montgomery}).

To conclude, we note the similarity of the geodesic curves depicted in fig. \ref{fig::isoenergetic} with the local \textit{association fields} from \cite{field} shown in fig. \ref{fig::associationfield}.
Fields, Heyes and Hess investigate, through a set of experiments, how the relative alignment of neighboring oriented elements is related to the perception of continuity in the human brain.
The information detected by single orientation selective cells is supposed to propagate locally through some \textit{long-range connections} in an orientation and position-specific modality (\cite{gilbert}, \cite{kapadia}).
According to our model, the natural interpretation of the local association field is the representation of the projection onto $D$ of a family of integral curves of the Hamiltonian vector field, i.e, the solutions to the geodesic equations corresponding to the joint constraints of position and orientation.

\section{Conclusions} 
Our comparison among the various V1 models
for border completion, available in the literature, shows
that with a simple mathematical modelling, taking advantage of the analogies
with the bicycle paths, 
allows us to obtain geodesics, which are strictly related to the ones
in \cite{petitot, citti, mumford}. 
Our treatment with hamiltonian formalism
is 
natural and establishes a dictionary, through low dimensional
subriemannian geometry, between visual path items and the bicycle
paths one as in \cite{mont}.

\section{Acknowledgements}
R. Fioresi wishes to thank Prof. Alekseevski, Prof. Citti, 
Prof. Sarti, Prof. Latini
and Prof. Breveglieri for helpful discussions and comments.

\bibliographystyle{plain}
\bibliography{bike-v1}



\end{document}